\title[Constructing Families Of Cospectral Regular Graphs]
{Constructing Families Of Cospectral Regular Graphs}
\author[M. Haythorpe and A. Newcombe]{\spreadout{M. HAYTHORPE}$^1$ and \spreadout{A. NEWCOMBE}$^2$%
\thanks{Research partially supported by ARC Discovery Grant DP150100618.}\\
\affilskip {$^1$} Michael Haythorpe, Flinders University,\\
\affilskip 1284 South Road, Tonsley, SA Australia 5042\\
michael.haythorpe@flinders.edu.au\\
\affilskip {$^2$} Alex Newcombe, Flinders University,\\
\affilskip 1284 South Road, Tonsley, SA Australia 5042\\
alex.newcombe@flinders.edu.au}
\newtheorem{thm}{Theorem}[section]
\newtheorem{lem}[thm]{Lemma}
\newtheorem{defn}[thm]{Definition}
\newtheorem{conj}[thm]{Conjecture}
\newtheorem{ex}[thm]{Example}
\newtheorem{rem}[thm]{Remark}
\begin{document}
\maketitle

\begin{abstract}
A set of graphs are called cospectral if their adjacency matrices have the same characteristic polynomial.  In this paper we introduce a simple method for constructing infinite families of cospectral regular graphs.  The construction is valid for special cases of a property introduced by Schwenk.  For the case of cubic (3-regular) graphs, computational results are given which show that the construction generates a large proportion of the cubic graphs, which are cospectral with another cubic graph.
\end{abstract}

\section{Introduction}
The characteristic polynomial of a matrix $A$ is the polynomial in $x$, $\det (xI-A)$.  A set of simple graphs $\{G_1, G_2...,G_k\}$ are called \textit{cospectral} if their adjacency matrices have identical characteristic polynomials.  A graph is \textit{not uniquely determined by its spectrum} if there is at least one non-isomorphic graph with which it is cospectral.  The investigation of the prevalence and properties of such graphs is one of the classical open problems in spectral graph theory, and hence, methods for constructing cospectral graphs are of interest.  To date, such construction methods have mainly fallen into two categories.  The first involves performing various operations on the edges or vertices of graphs to produce new graphs which are cospectral with the initial graphs.  Perhaps the most well-known example of this is the switching method of Godsil and McKay which was introduced in \cite{godsil} and then later generalised in \cite{haem}.  The second category involves `pasting' graphs together in intelligent ways to achieve the cospectrality.  Recently, there has been some interest in the construction of cospectral graphs which provide control over certain graph properties.  In particular, cospectral graphs which are regular, among other properties, are investigated in \cite{bapat}, \cite{filar} and \cite{blaz}.  A common feature of the existing constructions which ensure regularity is the use of operations which are similar to a product of graphs.  However, the nature of these operations means that both the degree of regularity and the order of the resulting graphs can become very large, which may be undesirable.  In the present work, we introduce a construction for regular cospectral graphs which avoids these problems; indeed, arbitrarily large cospectral regular graphs of any desired degree can be constructed.   The construction is valid for special cases of the notion of removal cospectral vertices which was introduced by Schwenk \cite{schwenk} and studied further in \cite{row}.  We paraphrase Schwenk's definition as follows:

\begin{defn}
\label{def1}
For two graphs $G_1$ and $G_2$, the subsets of vertices $S \subset V(G_1)$ and $T \subset V(G_2)$ are called {\em removal cospectral} if there exists a bijection $f:S \rightarrow T$ such that for every subset $X \subseteq S$, the graphs $G_1 \setminus X$ and $G_2 \setminus f(X)$ are cospectral.
\end{defn}

Note that, since $\emptyset \subseteq S$, it is implicit in Definition \ref{def1} that $G_1$ and $G_2$ are cospectral. It was subsequently shown by Godsil \cite{godsil2} that the requirement that $G_1 \setminus X$ and $G_2 \setminus f(X)$ be cospectral for every subset $X \subseteq S$ can be replaced with the same requirement for only those subsets $X \subseteq S$ which have cardinality at most two, and the converse is also true. That is, $S$ and $T$ are removal cospectral if and only if, for all choices of $i,j \in S$, the graphs $G_1 \setminus \{i,j\}$ and $G_2 \setminus \{f(i),f(j)\}$ are cospectral.

Our present construction takes a set of cospectral (possibly isomorphic), $k$-regular graphs and uses them to construct a new set of cospectral graphs, in which the resulting graphs are still $k$-regular.  This involves selecting any choice of a $k$-regular graph to be combined with each graph in the former set.  The order of the newly constructed graphs depends on this selection, and the growth in order can be made very small if desired.  We then demonstrate that, for some small choices of order $N$, a large proportion of the cubic graphs which are not uniquely determined by their spectrum can be produced by this construction.  Interestingly, this proportion appears to be increasing with $N$.

\section{Preliminaries}
Throughout this manuscript we use standard graph theory notation such as can be found in \cite{cvet}.   All graphs used here are simple, connected and undirected.  The neighborhood of a vertex $v$ is the set of vertices adjacent to $v$ and is denoted by $N(v)$.  The adjacency matrix of a graph $G$ is denoted by $A(G)$.  A subgraph of $G$ arising by deleting a set of vertices $U\subset V(G)$ and all edges incident to those vertices is denoted by $G \setminus U$.  Edge deletions are denoted by $G - e$ where $e \in E(G)$.  We denote the characteristic polynomial of a graph $G$ by $\phi (G,x):=\det(xI-A(G))$.  The walk generating matrix of a graph is $W(G,x)=\sum_{r\geq0} A(G)^rx^r$ whose $(i,j)$-th entry, denoted by $W_{ij}(G,x)$, is the generating function for the set of all walks in $G$ from vertex $i$ to vertex $j$.  A walk starting at vertex $i$ and ending at vertex $j$ is an $i{\text -}j$ walk.\\

We next define a special case of Definition \ref{def1} and call vertices which satisfy this special case \textit{replaceable}.

\begin{defn}
\label{def2}
For two graphs $G_1$ and $G_2$, the vertices $u \in V(G_1)$ and $v \in V(G_2)$ are called {\em replaceable} if their respective neighborhoods $N(u)$ and $N(v)$ are removal cospectral. If the bijection defining the removal cospectral set is $g : N(u) \rightarrow N(v)$, then we denote the replaceable vertices by the tuple $(u,v,g)$.
\end{defn}

\begin{lem}
\label{lem1}
Let $G_1$ and $G_2$ be graphs with replaceable vertices $\big(u,v,g \big)$, where $u \in V(G_1)$ and $v \in V(G_2)$ and $g:N(u) \rightarrow N(v)$.  Then the set $N(u) \cup \{u\}$ is removal cospectral with $N(v) \cup \{v\}$, with the new removal cospectral bijection $f:N(u) \cup \{u\} \rightarrow N(v) \cup \{v\}$ being equal to $g$ on $N(u)$ and $f(u)=v$.
\end{lem}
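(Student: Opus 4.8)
The plan is to invoke Godsil's reduction (quoted in the introduction): $N(u)\cup\{u\}$ and $N(v)\cup\{v\}$ are removal cospectral under the bijection $f$ described if and only if, for every pair $i,j$ in $N(u)\cup\{u\}$ (allowing $i=j$, and allowing the empty subset), the graphs $G_1\setminus X$ and $G_2\setminus f(X)$ are cospectral, where $X=\{i,j\}$. So the whole argument reduces to checking a finite list of cases according to how many of the deleted vertices are the apex vertex $u$ (resp.\ $v$).

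First I would dispose of the cases in which $u\notin X$. Then $X\subseteq N(u)$ and $f(X)\subseteq N(v)$, and the required cospectrality of $G_1\setminus X$ and $G_2\setminus f(X)$ is \emph{exactly} the hypothesis that $N(u)$ and $N(v)$ are removal cospectral via $g$; this covers $X=\emptyset$ and all one- and two-element subsets of $N(u)$. Next, the case $X=\{u\}$: here $G_1\setminus\{u\}$ and $G_2\setminus\{v\}$ must be cospectral, which is again contained in the hypothesis, since deleting $u$ from $G_1$ is the same as deleting the empty subset from $G_1\setminus\{u\}$ — but more directly, $N(u)$ removal cospectral with $N(v)$ already forces $G_1$ cospectral with $G_2$, and we will want the slightly stronger statement anyway, so I would instead argue as in the remaining case.

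The one genuinely new case is $X=\{u,w\}$ with $w\in N(u)$, where we must show $G_1\setminus\{u,w\}$ is cospectral with $G_2\setminus\{v,g(w)\}$. This is the main obstacle, because it mixes deletion of the apex with deletion of a neighbor, and the hypothesis only talks about deleting neighbors. The idea is that deleting $u$ from $G_1$ and then deleting $w$ is the same as first deleting $w$ from $G_1$ and then deleting $u$ from $G_1\setminus\{w\}$; and in $G_1\setminus\{w\}$ the vertex $u$ has neighborhood $N(u)\setminus\{w\}$, a set on which $g$ still restricts to a bijection onto $N(v)\setminus\{g(w)\}=N_{G_2\setminus\{g(w)\}}(v)$. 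One then needs a lemma of the form ``if $u$ and $v$ are replaceable in $G_1,G_2$ then $G_1\setminus\{u\}$ and $G_2\setminus\{v\}$ are cospectral'' applied to the pair $G_1\setminus\{w\}$, $G_2\setminus\{g(w)\}$; such a statement follows from the $W$-matrix / Schwenk-type identity expressing $\phi(G\setminus\{u\},x)$ in terms of $\phi(G\setminus X,x)$ for $X\subseteq N(u)\cup\{u\}$ and walk-generating data localized at $N(u)$, all of which is preserved by the removal-cospectral bijection. I expect the bulk of the write-up to be exactly this identity and the verification that every term in it is matched across $G_1$ and $G_2$ by $f$; once that is in place, the cases $X=\{u\}$ and $X=\{u,w\}$ both fall out, and Godsil's criterion then yields the full removal-cospectrality of $N(u)\cup\{u\}$ and $N(v)\cup\{v\}$.
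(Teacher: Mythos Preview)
Your case analysis via the deletion form of Godsil's criterion is sound, and the reduction of $X=\{u,w\}$ to $X=\{u\}$ by passing to $G_1\setminus\{w\}$, $G_2\setminus\{g(w)\}$ works (the restricted neighbourhoods remain removal cospectral there, since any subset of $N(u)\setminus\{w\}$ together with $w$ is still a subset of $N(u)$). The soft spot is the ``lemma'' you defer, that replaceable $u,v$ forces $G_1\setminus\{u\}$ cospectral with $G_2\setminus\{v\}$: you only gesture at an unspecified Schwenk-type identity. The paper takes a more direct route by using the \emph{walk-generating} form of Godsil's criterion throughout: $S$ and $T$ are removal cospectral iff $W_{ij}(G_1,x)=W_{h(i)h(j)}(G_2,x)$ for all $i,j\in S$. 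With $S=N(u)$ this is the hypothesis, and extending to $N(u)\cup\{u\}$ is then the one-line observation that every $u$--$j$ walk decomposes as an edge $(u,i)$ followed by an $i$--$j$ walk with $i\in N(u)$ (and similarly for closed walks at $u$), giving $W_{uj}=x\sum_{i\in N(u)}W_{ij}$ and $W_{uu}=1+x^{2}\sum_{i,j\in N(u)}W_{ij}$. That walk decomposition is exactly the identity you would need to make your lemma precise (via the standard correspondence between $W_{uu}$ and $\phi(G\setminus u)/\phi(G)$), so your approach ultimately rests on the same fact but wraps it in an extra layer of case analysis and translation back to characteristic polynomials that the paper's argument bypasses.
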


\begin{proof}
Godsil proved in \cite{godsil2} that $S \subset V(G_1)$ and $T \subset V(G_2) $ are removal cospectral if and only if for all $i,j \in S$, the following equation holds for some bijection $h$.
\begin{equation} \label{eq1}
W_{ij}(G_1,x)=W_{h(i)h(j)}(G_2,x).
\end{equation}

In particular, if we set $S = N(u)$ and $T = N(v)$, then we have $h = g$. Then, (\ref{eq1}) holds if and only if, for every $r$, there is a bijection between $i{\text -}j$ walks of length $r$ in $G_1$ and $g(i){\text -}g(j)$ walks of length $r$ in $G_2$.  To complete the proof, we show that this correspondence implies that there are analagous bijections when $S = N(u) \cup \{u\}$.

For each $i,j \in N(u)$, let $w$ be any $i{\text -}j$ walk. Then $w$ can be uniquely extended to a walk from $u$ to $j$ by appending the edge $(u,i)$.  Also, $w$ can be uniquely extended to a $u{\text -}u$ walk by appending the edges $(u,i)$ and $(j,u)$.   These observations, along with the already existing correspondence between $i{\text -}j$ walks and $g(i){\text -}g(j)$ walks, provide one bijection between $u{\text -}j$ walks and $v{\text -}f(j)$ walks for each $j \in N(u)$, and another bijection between $u{\text -}u$ walks and $v{\text -}v$ walks. Hence, there is a bijection between walks in $G_1$ starting and ending in $N(u) \cup \{u\}$, and walks in $G_2$ starting and ending in $N(v) \cup \{v\}$.
\end{proof}

In Definition \ref{def3} we describe a type of graph composition which we call the \textit{vertex composition}.

\begin{defn}
\label{def3}
Let $G$ and $H$ be graphs with the vertices $u \in V(G)$ and $v \in V(H)$ and their neighborhoods $N(u)=\{u_1,u_2,...,u_k\}$, $N(v)=\{v_1,v_                                                                                                                                                                                                                                                                                                                                                                                                                                                                                                                                                                                                                                                                                                                                                                                                                                                                                                                                                                                                                                                                                                                                                                                       2,...,v_k\}$.  Define any bijection $f:N(u) \rightarrow N(v)$, then the graph $(G \circ H)$ is the graph with the vertex set $(G \setminus \{u\}) \cup (H \setminus \{v\})$ and the additional edges $(u_i,f(u_i))$; $i=1,2,...,k$.  Note that $|N(u)|=|N(v)|$.  This \textit{vertex composition} will be denoted by the tuple $\big( (G \circ H),u,v,f\big)$.
\end{defn}

Note that if $G$ and $H$ are both $k$-regular, then the graph $(G \circ H)$ will be as well.  An example of such a composition is displayed in Figure \ref{fig1}.

\begin{figure}[H]
\centering
\includegraphics[scale=0.3]{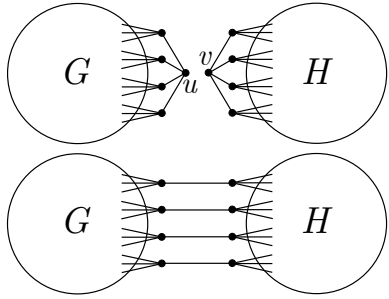}
\caption{Vertex composition $\big( (G \circ H),u,v,f\big)$.}
\label{fig1}
\end{figure}

We will be applying the vertex composition in Definition \ref{def3} upon sets of graphs, to produce new sets of graphs.  The new edges formed in each composition must be done so in a consistent manner.  To this end, consider a set of graphs $\{G_1,...,G_m\}$ with replaceable vertices $u_1 \in V(G_1),u_2 \in V(G_2),...,u_m \in V(G_m)$ such that $|N(u_i)|=k \ \forall i$.  Let $g_{ij}$ be the bijections defining the removal cospectral neighborhoods, hence we denote the replaceable vertices as the tuples $\big(u_i,u_j,g_{ij} \big)$.  Consider a fixed graph $H$ with a fixed vertex $h \in V(H)$, $|N(h)|=k$ and an arbirtrary bijection $f:N(u_1) \rightarrow N(h)$ which is to determine the new edges formed in the first composition $\big( (G_1 \circ H),u_1,h,f\big)$.  Then the new edges formed in the rest of the compositions are determined by  $\big( (G_i \circ H),u_i,h,f(g_{i1}) \big)$ for $i=2, \dots,m$.

\begin{thm}
\label{thm1}
Let $G_1$ and $G_2$ be graphs with replaceable vertices $\big(u,v,g\big)$ where $u \in V(G_1)$, $v \in V(G_2)$ and $g:N(u)\rightarrow N(v)$.  For a third graph $H$ with a fixed vertex $h \in V(H)$, the compositions $\big( (G_1 \circ H),u,h,f\big)$ and $\big( (G_2 \circ H),v,h,f(g^{-1}) \big)$ are cospectral.
\end{thm}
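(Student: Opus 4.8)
The plan is to reduce $\phi\big((G_1\circ H),x\big)$ and $\phi\big((G_2\circ H),x\big)$ to products of three explicit factors and then compare those factors one at a time. Set $G_1'=G_1\setminus\{u\}$, $G_2'=G_2\setminus\{v\}$, $H'=H\setminus\{h\}$, and index $N(u)=\{u_1,\dots,u_k\}$, $N(v)=\{v_1,\dots,v_k\}$ with $v_j=g(u_j)$, and $N(h)=\{h_1,\dots,h_k\}$ with $h_j=f(u_j)$. With these labels, in \emph{both} compositions the new edges attach ``position $j$'' on the $G$-side to $h_j$ on the $H$-side: in $(G_1\circ H)$ because the matching bijection is $f$, and in $(G_2\circ H)$ because $\big(f\circ g^{-1}\big)(v_j)=f\big(g^{-1}(g(u_j))\big)=f(u_j)=h_j$. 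This common identification is precisely the point of twisting the second composition by $g^{-1}$.

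Ordering the vertices of $(G_1\circ H)$ as $V(G_1')$ followed by $V(H')$, its adjacency matrix has diagonal blocks $A(G_1')$ and $A(H')$ and off-diagonal block $B$, the $0/1$ matching matrix with a single $1$ in each row of $N(u)$ (at column $h_j$ in row $u_j$) and zeros in all other rows. For all but finitely many $x$ the matrix $xI-A(H')$ is invertible, so a Schur complement against the $H'$-block gives
\[
\phi\big((G_1\circ H),x\big)=\phi(H',x)\,\det\!\Big(xI-A(G_1')-B\,(xI-A(H'))^{-1}B^{\top}\Big).
\]
The term $B(xI-A(H'))^{-1}B^{\top}$ is supported on the $N(u)\times N(u)$ block, where it equals the $k\times k$ matrix $M$ with $M_{jl}=\big[(xI-A(H'))^{-1}\big]_{h_j h_l}$. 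Factoring out $\det(xI-A(G_1'))=\phi(G_1\setminus u,x)$ and using that this coupling matrix has columns supported only on $N(u)$, the residual determinant collapses to a $k\times k$ one, and I get
\[
\phi\big((G_1\circ H),x\big)=\phi(H',x)\,\phi(G_1\setminus u,x)\,\det\!\big(I_k-R^{(1)}M\big),
\]
where $R^{(1)}$ is the principal submatrix of $(xI-A(G_1'))^{-1}$ on $N(u)$, i.e. $R^{(1)}_{jl}=\big[(xI-A(G_1'))^{-1}\big]_{u_j u_l}$. The identical computation for $(G_2\circ H)$ yields $\phi\big((G_2\circ H),x\big)=\phi(H',x)\,\phi(G_2\setminus v,x)\,\det\!\big(I_k-R^{(2)}M\big)$ with the \emph{same} $M$ (by the common identification above) and $R^{(2)}_{jl}=\big[(xI-A(G_2'))^{-1}\big]_{v_j v_l}$.

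It then remains to match the two $G$-dependent factors. First, removal cospectrality applied to the subset $X=\{u\}$ gives $\phi(G_1\setminus u,x)=\phi(G_2\setminus v,x)$. Second, the standard formula for a principal submatrix of an inverse gives, for $u_j,u_l\in N(u)$,
\[
R^{(1)}_{jl}=\big[(xI-A(G_1))^{-1}\big]_{u_j u_l}-\frac{\big[(xI-A(G_1))^{-1}\big]_{u_j u}\,\big[(xI-A(G_1))^{-1}\big]_{u\,u_l}}{\big[(xI-A(G_1))^{-1}\big]_{uu}},
\]
and likewise for $R^{(2)}$ with $u,u_j,u_l$ replaced by $v,v_j,v_l$. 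By Lemma \ref{lem1}, $N(u)\cup\{u\}$ and $N(v)\cup\{v\}$ are removal cospectral via the bijection that extends $g$ and sends $u\mapsto v$, so Godsil's criterion, equation (\ref{eq1}), rewritten in terms of resolvents through $W(G,x)=(I-xA(G))^{-1}$, equates every entry of $(xI-A(G_1))^{-1}$ indexed by a pair from $N(u)\cup\{u\}$ with the corresponding entry of $(xI-A(G_2))^{-1}$. Substituting this into the two displayed expressions gives $R^{(1)}=R^{(2)}$, hence $\det\!\big(I_k-R^{(1)}M\big)=\det\!\big(I_k-R^{(2)}M\big)$, and all three factors agree.

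I expect the main obstacle to be purely organizational: pinning down the vertex orderings and the relabellings through $f$ and $g$ carefully enough that the coupling block really is the \emph{same} matrix $M$ in both compositions, and that $R^{(1)}$ and $R^{(2)}$ are compared under matching indices — this is the single place where the exact bijection $f\circ g^{-1}$ in the statement is forced. The analytic side is routine: non-invertibility of $xI-A(H')$ or $xI-A(G_i')$, or vanishing of $\big[(xI-A(G_i))^{-1}\big]_{uu}$, happens only for finitely many $x$, and every identity above is an identity of rational functions, so it extends to the claimed identity of characteristic polynomials.
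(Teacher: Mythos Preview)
Your proof is correct and takes a genuinely different route from the paper's. The paper invokes Godsil's Theorem~4.1 from \cite{godsil2} to decompose $W_N((G_1\circ H),x)^{-1}$ as a sum over the two overlapping pieces $G_1\setminus u$ and $F=(H\cup N(u))$, and then Godsil's Theorem~3.1 to pass to determinants, concluding that $\phi((G_1\circ H),x^{-1})$ is determined by $\phi(G_1\setminus N(u),x^{-1})$, $\phi(H\setminus h,x^{-1})$, and the polynomials $\phi(G_1\setminus(K\cup\{u\}),x^{-1})$ for $|K|\le 2$, all of which match on the $G_2$-side by Lemma~\ref{lem1}. You instead work directly with the block adjacency matrix of $(G_i\circ H)$ and a Schur complement against the $H'$-block, then collapse to the $k\times k$ determinant $\det(I_k-R^{(i)}M)$ and compare $R^{(1)}$ with $R^{(2)}$ via the rank-one deletion formula for resolvents. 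Both arguments ultimately rest on Lemma~\ref{lem1} together with Godsil's walk-generating criterion~(\ref{eq1}); yours is more self-contained linear algebra (no external decomposition theorems needed), while the paper's is terser but leans on heavier cited machinery. One small presentational point: your claim $\phi(G_1\setminus u,x)=\phi(G_2\setminus v,x)$ from ``removal cospectrality applied to $X=\{u\}$'' really already needs Lemma~\ref{lem1}, since $u\notin N(u)$; you invoke that lemma one sentence later anyway, so the logic is sound, but it would read more cleanly to appeal to it up front.
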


\begin{proof}
The method of proof follows from results of Godsil in \cite{godsil2} which are outlined below.  Consider two induced subgraphs within the graph $(G_1 \circ H)$, the first being the subgraph induced by the remaining vertices of $G_1$, and the second being the subgraph induced by $(H\cup N(u)):=F$.  Then plainly $G_1 \cap F = N(u)$.  The submatrix of the walk generating matrix $W(G_1,x)$ corresponding to the row and column indicies from $N(u)$ is denoted as $W_{N}(G_1,x)$.  Then Theorem $4.1$ of Godsil in \cite{godsil2} asserts that
\[W_{N}((G_1 \circ H),x)^{-1}=W_{N}(G_1\setminus u,x)^{-1}+W_{N}(F,x)^{-1}+xA(N(u))-I.\]
Taking the determinant of both sides, and then employing Theorem 3.1 from \cite{godsil2} on the left hand side, we see that
\[ \dfrac{x^{|N(u)|}\phi((G_1 \circ H),x^{-1})}{\phi(G_1 \setminus N(u),x^{-1}) \phi(H \setminus h,x^{-1})}=\det \big(W_{N}(G_1\setminus u,x)^{-1}+W_{N}(F,x)^{-1}+xA(N(u))-I\big).\]
As is described in \cite{godsil2}, the above shows that $\phi((G_1 \circ H),x^{-1})$ is determined by the polynomials $\phi(G_1 \setminus N(u),x^{-1})$, $ \phi(H \setminus h,x^{-1})$  and $\det \big(W_{N}(G_1\setminus u,x)^{-1}+W_{N}(F,x)^{-1}+xA(N(u))-I\big)$.  The terms inside the determinant expression are determined by $\phi(G_1 \setminus (K\cup \{u\}),x^{-1})$ and $\phi(F \setminus K,x^{-1})$ where $K$ ranges over all subsets of $N(u)$ of cardinality at most two.

By considering the same representation for the composition $\big( (G_2 \circ H),v,h,f(g^{-1}) \big)$, it is easily seen that the polynomials which determine $\phi((G_2 \circ H),x^{-1})$ are, by the cospectrality of $G_1$ and $G_2$ and by Lemma \ref{lem1}, the same as the polynomials which determine $\phi((G_1 \circ H),x^{-1})$.  Hence, the result follows.

\end{proof}

Next, similarly to the definitions above, another special case of Definition \ref{def1} is in regards to \textit{replaceable edges}.

\begin{defn}
\label{def4}
For two graphs $G_1$ and $G_2$, the edges $e_1 \in E(G_1)$ and $e_2 \in E(G_2)$ are called {\em replaceable} if their sets of incident vertices are removal cospectral. If the bijection defining the removal cospectral set is $g$, then we denote the replaceable edges by the tuple $(e_1,e_2,g)$.
\end{defn}

The following is a simple consequence of Definitions \ref{def1} and \ref{def4}.

\begin{lem}
\label{lem2}
Let $G_1$ and $G_2$ be graphs with replaceable edges $\big(e_1,e_2,g \big)$ where $e_1 \in E(G_1)$ and $e_2 \in E(G_2)$.  Then the vertices incident to $e_1$ remain removal cospectral to the vertices incident to $e_2$ in the graphs $G_1 - e_1$ and $G_2 - e_2$.
\end{lem}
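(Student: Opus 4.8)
The plan is to verify directly that the endpoints of $e_1$ and $e_2$ satisfy the definition of a removal cospectral pair inside $G_1-e_1$ and $G_2-e_2$, using the same bijection $g$. Write $u,v$ for the endpoints of $e_1$ and $g(u),g(v)$ for the endpoints of $e_2$; by hypothesis $\{u,v\}$ and $\{g(u),g(v)\}$ are removal cospectral in $G_1,G_2$. By the reduction of Godsil recalled above \cite{godsil2}, it is enough to show that $(G_1-e_1)\setminus X$ and $(G_2-e_2)\setminus g(X)$ are cospectral for each of the four subsets $X\subseteq\{u,v\}$.

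First I would dispose of the three cases $X\neq\emptyset$. Since deleting a vertex also deletes every edge incident to it, and $e_1$ is incident to both $u$ and $v$, we have $(G_1-e_1)\setminus\{u\}=G_1\setminus\{u\}$, $(G_1-e_1)\setminus\{v\}=G_1\setminus\{v\}$ and $(G_1-e_1)\setminus\{u,v\}=G_1\setminus\{u,v\}$, with the analogous identities in $G_2$. Hence for every non-empty $X$ the desired cospectrality is literally one of the cospectralities already supplied by the hypothesis that $\{u,v\}$ and $\{g(u),g(v)\}$ are removal cospectral in $G_1,G_2$, so nothing remains to be proved in these cases.

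The remaining case $X=\emptyset$ asks for $\phi(G_1-e_1,x)=\phi(G_2-e_2,x)$, and here I would invoke the edge-deletion identity
\[
\phi(G-e,x)=\phi(G,x)+2\,C_{uv}(x)-\phi(G\setminus\{u,v\},x),
\]
valid for any edge $e=uv$, where $C_{uv}(x)$ is the $(u,v)$ cofactor of $xI-A(G)$; this follows by expanding $\det\!\big(xI-A(G-e)\big)$ by multilinearity in the two columns indexed by $u$ and $v$. On the right-hand side, $\phi(G_1,x)=\phi(G_2,x)$ since cospectrality is built into removal cospectrality, and $\phi(G_1\setminus\{u,v\},x)=\phi(G_2\setminus\{g(u),g(v)\},x)$ is precisely the $X=\{u,v\}$ instance of the hypothesis. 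For the cofactor term I would use
\[
C_{uv}(x)=\phi(G,x)\,\big[(xI-A(G))^{-1}\big]_{uv}=x^{-1}\,\phi(G,x)\,W_{uv}(G,x^{-1}),
\]
so that $C_{uv}$ is determined by $\phi(G,\cdot)$ together with the generating function $W_{uv}(G,\cdot)$ for walks from $u$ to $v$; since removal cospectrality of $\{u,v\}$ and $\{g(u),g(v)\}$ gives $W_{uv}(G_1,x)=W_{g(u)g(v)}(G_2,x)$ by Godsil's characterisation (equation~(\ref{eq1})), the cofactor terms agree as well. Comparing the three terms then yields $\phi(G_1-e_1,x)=\phi(G_2-e_2,x)$, which finishes the verification.

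The only step that needs a moment's thought is the cofactor term: a priori $C_{uv}$ looks like data about $G$ that the hypothesis does not obviously control, and the crux is the observation that it is $\phi(G,x)$ times the $(u,v)$ entry of the resolvent, hence is pinned down by the characteristic polynomial and the walk generating function that removal cospectrality already fixes. Everything else is routine bookkeeping, and in spirit the argument parallels the vertex case treated via Lemma~\ref{lem1}: deleting an endpoint of $e_1$ removes $e_1$ at no extra cost, so the substantive work has effectively been done already. The one technical nicety is to read the resolvent identities as identities in the field of rational functions in $x$ (so the singularities of $xI-A(G)$ play no role) and to keep the $x\leftrightarrow x^{-1}$ change of variable between the resolvent and $W$ consistent throughout.
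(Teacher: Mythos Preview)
Your proof is correct and follows the same strategy as the paper: both observe that deleting any endpoint of $e_i$ already removes $e_i$, reducing the claim to $\phi(G_1-e_1,x)=\phi(G_2-e_2,x)$, and then extract this from an edge-deletion identity for $\phi$. The only cosmetic difference is that the paper writes the cross term as $\sqrt{\phi(G_1\setminus u_1)\phi(G_1\setminus u_2)-\phi(G_1)\phi(G_1\setminus\{u_1,u_2\})}$ and compares characteristic polynomials of vertex-deleted subgraphs directly, whereas you write it as the cofactor $C_{uv}$ and match it via $W_{uv}$ using Godsil's characterisation; the two formulations are equivalent by the Desnanot--Jacobi identity $C_{uv}^2=\phi(G\setminus u)\phi(G\setminus v)-\phi(G)\phi(G\setminus\{u,v\})$, and your version has the minor advantage of sidestepping any sign ambiguity in the square root.
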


\begin{proof}
Without loss of generality, let $e_1=(u_1,u_2)$, $e_2=(v_1,v_2)$ and $g(u_1)=v_1$, $g(u_2)=v_2$.  Note that deleting any one or two of the vertices $u_1$ and $u_2$ in $G_1 - e_1$ gives the same graph as the deletion in the original $G_1$.  So the only case which needs to be considered is when none of the vertices are deleted.  A well known representation of $\phi(G_1,x)$, e.g. see \cite{godsil2}, is
\begin{multline*}
\phi (G_1,x)= \phi (G_1-e_1,x)-\phi (G_1\setminus \{ u_1,u_2 \},x) \\
 - 2 \sqrt{\phi (G_1\setminus u_1,x)\phi (G_1\setminus u_2,x) - \phi (G_1,x)\phi (G_1\setminus \{u_1,u_2\},x)}.
\end{multline*}
The analogous representation of $\phi(G_2,x)$ is
\begin{multline*}
\phi (G_2,x)= \phi (G_2-e_2,x)-\phi (G_2\setminus \{ v_1,v_2 \},x) \\
 - 2 \sqrt{\phi (G_2\setminus v_1,x)\phi (G_2\setminus v_2,x) - \phi (G_2,x)\phi (G_2\setminus \{v_1,v_2\},x)}.
\end{multline*}
Comparing these two equations and considering cospectrality of the various vertex deleted graphs, reveals that $\phi(G_1-e_1,x)=\phi(G_2-e_2,x)$, and hence the result follows.

\end{proof}

In Definition \ref{def5}, we describe another type of graph composition which we call the \textit{edge composition}.

\begin{defn}
\label{def5}
Let $G$ and $H$ be graphs with the edges $e_1 \in E(G)$ and $e_2 \in E(H)$ and their respective incident vertices $u_1,u_2 \in V(G)$ and $v_1,v_2 \in V(H)$.  Define any bijection $f:\{u_1,u_2\} \rightarrow \{v_1,v_2\}$, then the graph $(G \diamond H)$ is the graph with the vertex set $V(G) \cup V(H)$, and edge set $E(G) \cup E(H) \setminus \{e_1,e_2\}$ plus the additional edges $(u_i,f(u_i)) \ i=1,2$.   This \textit{edge composition} will be denoted by the tuple $\big( (G \diamond H),e_1,e_2,f\big)$.  Figure \ref{fig2} is an illustration of $(G \diamond H)$.
\end{defn}
\begin{figure}[H]
\includegraphics[scale=0.3]{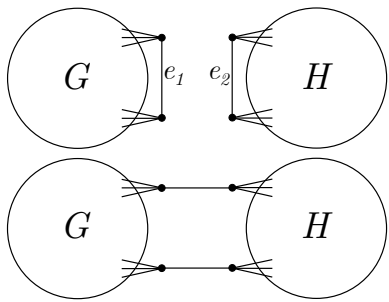}
\centering
\caption{Edge composition $\big( (G \diamond H),e_1,e_2,f\big)$ where $f(u_1)=v_1$ and $f(u_2)=v_2$.}
\label{fig2}
\end{figure}

By using Lemma \ref{lem2} and the analagous observations as in Theorem \ref{thm1}, the following can be shown.

\begin{thm}
\label{thm2}
Let $G_1$ and $G_2$ be graphs with replaceable edges $\big(e_1,e_2,g \big)$ where $e_1 \in E(G_1)$, $e_2 \in E(G_2)$, and $e_1 = (u_1,u_2)$.  For a third graph $H$ containing an edge $h \in E(H)$, where $h = (v_1,v_2)$, and an arbitrary bijection $f: \{u_1,u_2\} \rightarrow \{v_1,v_2\}$, the compositions $\big( (G_1 \diamond H),e_1,h,f\big)$ and $\big( (G_2 \diamond H),e_2,h,f(g^{-1})\big)$ are cospectral.
\end{thm}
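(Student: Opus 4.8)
The plan is to mirror the proof of Theorem \ref{thm1} as closely as possible, replacing the role of the vertex-deletion representation with the edge-deletion representation supplied by Lemma \ref{lem2}. First I would set $e_1 = (u_1,u_2)$ in $G_1$, $e_2 = (v_1,v_2)$ in $G_2$ with $g(u_1)=v_1$, $g(u_2)=v_2$, and $h = (v_1',v_2')$ in $H$, and fix the bijection $f:\{u_1,u_2\}\to\{v_1',v_2'\}$ used to form $(G_1\diamond H)$. Inside $(G_1\diamond H)$ I would consider the two induced subgraphs obtained from the vertex sets of $G_1$ and of $H$ respectively; since $V(G_1)\cap V(H)=\emptyset$ but the new edges $(u_i,f(u_i))$ link the two pieces, this situation is exactly the ``pasting along an edge'' case that Godsil's Theorems 3.1 and 4.1 in \cite{godsil2} are designed to handle, with the ``attachment set'' being $\{u_1,u_2\}$ on the $G_1$ side and $\{v_1',v_2'\}$ on the $H$ side.

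Next I would invoke the same machinery as in Theorem \ref{thm1}: the walk-generating submatrix identity of \cite{godsil2} expresses $W_{\{u_1,u_2\}}((G_1\diamond H),x)^{-1}$ in terms of $W_{\{u_1,u_2\}}(G_1 - e_1,x)^{-1}$, $W_{\{v_1',v_2'\}}(H - h,x)^{-1}$, and the adjacency data of the new edges; taking determinants and applying Theorem 3.1 of \cite{godsil2} shows that $\phi((G_1\diamond H),x^{-1})$ is completely determined by $\phi(G_1 - e_1 \setminus K, x^{-1})$ and $\phi(H - h \setminus K', x^{-1})$, where $K$ and $K'$ range over subsets of $\{u_1,u_2\}$ and $\{v_1',v_2'\}$ of size at most two, plus $\phi(G_1-e_1,x^{-1})$ and $\phi(H-h,x^{-1})$ themselves. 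The corresponding list of polynomials for $(G_2\diamond H)$ with the bijection $f(g^{-1})$ is obtained by the same formula.

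Then I would match the two lists term by term. Here Lemma \ref{lem2} does the essential work: it tells us that $\{u_1,u_2\}$ and $\{v_1,v_2\}$ remain removal cospectral (via $g$) in $G_1 - e_1$ and $G_2 - e_2$, which is precisely what is needed so that every $\phi(G_1 - e_1 \setminus K, x)$ equals $\phi(G_2 - e_2 \setminus g(K), x)$, and in particular $\phi(G_1 - e_1,x)=\phi(G_2-e_2,x)$. The $H$-side polynomials are literally identical in both compositions. The only point requiring care is the bookkeeping of which vertex of $H$ each new edge attaches to: because the second composition uses $f(g^{-1})$ rather than $f$, the relabelling is set up so that $u_i \leftrightarrow v_i \leftrightarrow f(u_i)$-in-$H$ is consistent across both graphs, ensuring the ``new edge'' adjacency contribution $xA(\cdot)$ inside the determinant is the same.

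The main obstacle I anticipate is purely expository rather than mathematical: one must be sure that Godsil's edge-pasting identity applies in exactly this configuration (two disjoint graphs joined by the two new edges, as opposed to identifying vertices), and that the determinant expression genuinely depends only on the small-cardinality deleted-subgraph polynomials listed above. Once that is granted, the proof is a direct transcription of the Theorem \ref{thm1} argument with Lemma \ref{lem2} in place of Lemma \ref{lem1}, so I would keep the write-up short, stating ``by the same argument as in Theorem \ref{thm1}, using Lemma \ref{lem2} in place of Lemma \ref{lem1}'' and only spelling out the index-matching induced by $f(g^{-1})$.
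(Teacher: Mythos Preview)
Your proposal is correct and follows exactly the route the paper indicates: the paper's entire proof is the single sentence ``By using Lemma \ref{lem2} and the analogous observations as in Theorem \ref{thm1}, the following can be shown,'' which is precisely your plan. The obstacle you flag dissolves if you set up the two induced subgraphs to overlap on $\{u_1,u_2\}$ (take the $G_1$-side subgraph to be the one induced by $V(G_1)$, namely $G_1-e_1$, and the $H$-side subgraph $F$ to be the one induced by $V(H)\cup\{u_1,u_2\}$), so that Godsil's Theorems 3.1 and 4.1 apply verbatim with intersection $\{u_1,u_2\}$, in direct parallel to the $N(u)$ overlap used in Theorem \ref{thm1}.
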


\begin{rem}
\normalfont In the proceeding section we will discuss what happens when these constructions are applied to regular graphs.  In the special case when all graphs involved are 3-regular, the vertex composition and edge composition mimic, respectively, the types 3 and 2 breeding operations discussed in \cite{ban}.  From \cite{ban} we then know various properties of the resulting cospectral graphs.  For example, $(G \circ H)$ is planar or bipartite if and only if $G$ and $H$ both are, similarly for $(G \diamond H)$.  Also, $G$ and $H$ being Hamiltonian is a necessary condition for $(G \circ H)$ or $(G \diamond H)$ to be Hamiltonian.  The study of cospectral 3-regular graphs with differing Hamiltonicity is itself a topic of research, e.g. see \cite{bork,filar}.
\end{rem}


\section{Constructing cospectral regular graphs}
In the following, we only consider replaceable vertices, however an analogous method can be easily obtained for replaceable edges.  We begin with a set of cospectral $k$-regular graphs $\{G_1,...,G_m\}$, each of order $N$, with the vertices $u_1 \in V(G_1),u_2 \in V(G_2),...,u_m \in V(G_m)$, such that $(u_i,u_j,g_{ij})$ are replaceable vertices for any choice of graphs $G_i$ and $G_j$, and where $g_{ij} : N(u_i) \rightarrow N(u_j).$ We now illustrate how to use this set for the construction of new cospectral $k$-regular graphs.  Consider a second set of cospectral $k$-regular graphs $\{H_1,...,H_n\}$, of order $M$, defined similarly to above with replaceable vertices $v_i \in V(H_i)$, and let the bijections defining these removal cospectral sets be $h_{ij}:N(v_i) \rightarrow N(v_j)$.  Choose an arbitrary bijection $f:N(u_1) \rightarrow N(v_1)$ for the first composition $\big( (G_1 \circ H_1),u_1,v_1,f\big)$.  Then, by applying Theorem \ref{thm1} multiple times, all of the compositions $\big( (G_i \circ H_j),u_i,v_j,h_{1j}(f(g_{i1}))\big)$ for $ i=1,...,m$ and $j=1,...,n$ are cospectral.  This produces a set of cardinality $nm$ cospectral $k$-regular graphs on $N+M-2$ vertices. Note that choosing a different bijection $f$ could potentially produce an alternate set of cardinality $nm$ cospectral graphs. The tedious notation here should be cleared up upon viewing the example below.  Essentially we are just ensuring that the new edges are connected to the appropriate vertices.\\

\begin{ex}
\normalfont The cubic graphs $\{G_1, G_2\}$ in Figure \ref{Fig5} are cospectral with replaceable vertices $\big(t_1,u_1,g\big)$ and the cubic graphs $\{H_1, H_2\}$ in Figure \ref{Fig5} are also cospectral with replaceable vertices $\big(v_1,w_1,h \big)$.  The removal cospectral bijections are such that $g(t_i)=u_i$ and $h(v_i)=w_i$ for $i=2,3,4$.  First, consider the graph $H_1$ and an arbitrary bijection $f:N(t_1) \rightarrow N(v_1)$.  For this example we chose $f(t_i)=v_i$ for $i=2,3,4$.  Then the graph obtained from the composition $\big((G_1 \circ H_1),t_1,v_1,f\big)$ is the first graph displayed in Figure \ref{Fig6}.  The second graph displayed in Figure \ref{Fig6} is obtained from the composition $\big((G_2 \circ H_1),u_1,v_1,f(g^{-1})\big)$.  By Theorem \ref{thm1}, these two graphs are cospectral.  Next, we consider the graph $H_2$ and construct two more cospectral graphs with the compositions $\big((G_1 \circ H_2),t_1,w_1,h(f)\big)$ and $\big((G_2 \circ H_2),u_1,w_1,h(f(g^{-1}))\big)$, displayed as the final two graphs in Figure \ref{Fig6}. Then, by Theorem \ref{thm1}, we can conclude that all four graphs displayed in Figure \ref{Fig6} are cospectral.\\
\end{ex}

\begin{figure}[H]
\centering
\includegraphics[scale=0.35]{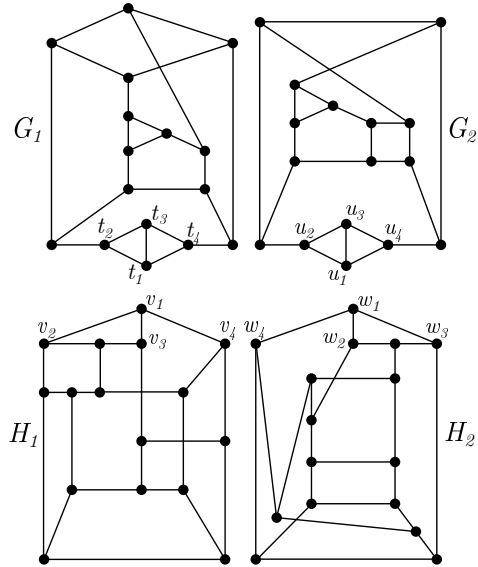}
\caption{The top left graph $G_1$ is cospectral with the top right graph $G_2$ with the replaceable vertices $\big(t_1,u_1,g\big)$. The bottom left graph $H_1$ is cospectral with the bottom right graph $H_2$ with the replaceable vertices $\big(v_1,w_1,h \big)$.}
\label{Fig5}
\end{figure}

\begin{figure}[H]
\centering
\includegraphics[scale=0.4]{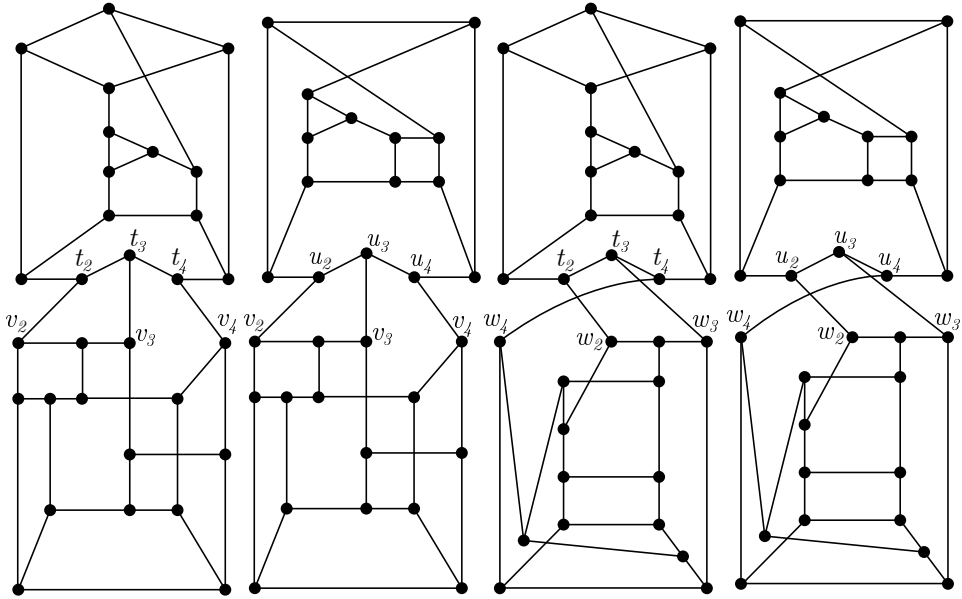}
\caption{The four non-isomorphic, cospectral, graphs arising from the compositions $\big((G_1 \circ H_1),t_1,v_1,f\big)$, $\big((G_2 \circ H_1),u_1,v_1,f(g^{-1})\big)$, $\big((G_1 \circ H_2),t_1,w_1,h(f)\big)$ and $\big((G_2 \circ H_2),u_1,w_1,h(f(g^{-1}))\big)$.}
\label{Fig6}
\end{figure}

We now make a couple of remarks about this method. First, it should be noted that cospectral $k$-regular graphs produced by this method have cyclic edge connectivity of at most $k$. Hence, any cospectral graphs with larger cyclic edge connectivity can not be produced in this manner. Second, the set of graphs $\{H_1, \dots, H_m\}$ could be chosen to have cardinality one. In such a case, any vertex in the one graph may be chosen as its ``replaceable" vertex.

\section{Computational results}
Cubic graphs provide a nice platform for conducting experiments because it is computationally tractable to perform exhaustive searches over the set of all cubic graphs of small order, say $N \leq 20$.  We provide various computational results in relation to replaceable vertices/edges and cubic graphs.  In Table \ref{tb1} we display the number of cubic graphs that possess replaceable vertices or edges within themselves, such that those vertices or edges lie in different orbits.

Let NUS3 (non-unique spectrum) denote a cubic graph which is cospectral with at least one other cubic graph, then in Table \ref{tb2} we demonstrate the commonness of replaceable vertices/edges among the cubic graphs which are NUS3.

\begin{table}[H]
\small
\centering
\caption{Total number of connected, non-isomorphic, cubic graphs of order $N$ and the numbers of those which contain replaceable vertices/edges within themselves, which are from different orbits.}
\label{tb1}
\begin{tabular}{| l l l l l l |}
\hline
Order & Total graphs & Contain rep. edge & \% & Contain rep. vertex & \%  \\
\hline
 12     &    85   	&    3   	& 3.6 	&  2 &  2.4 \\
 14     &  509  	&  16 		& 3.1 & 8  & 1.6\\
  16     &  4060  	&  115 	& 2.8 & 49 & 1.2 \\
   18     & 41301 	& 670   	& 1.6 & 354 &  0.9   \\
  20    &  510489   	& 4516 	& 0.9 &  1993 &  0.4   \\
\hline
\end{tabular}
\end{table}

\begin{table}[H]
\small
\centering
\caption{Number of cubic graphs of order $N$ which are cospectral with at least one other cubic graph (NUS3), and the numbers of those which also contain replaceable vertices/edges (with another NUS3 cubic graph).}
\label{tb2}
\begin{tabular}{ | l l l l l l | }
\hline
Order & NUS3 graphs & Contain rep. edge & \% & Contain rep. vertex & \%  \\
\hline
 14     &             6             &              6              & 100 &            4    &  66.7          \\
 16    &              83           &               77        &  92.8 &    65        &  78.3      \\
 18    &            956                &            868                & 90.8 &        800   &    83.7               \\
 20    &             9779               &           9529                 & 97.4 &               9271    &     94.8      \\
 22    &            114635                &         114304                   & 99.7 &     111325     &     97.1    \\
\hline
\end{tabular}
\end{table}

One possible explanation for the dramatic increase in commonness from Table \ref{tb1} to Table \ref{tb2} is that replaceable edges and vertices are often, in some sense, ``retained" when cospectral graphs are created.  This implies that we should be able to construct an increasing proportion of such graphs as the order increases.  This appears to be the case, as can be seen in Table \ref{tb3} which displays the number of cubic graphs which are NUS3 and the number of those which can be obtained as a result of the construction in Section 3. Recall that NUS3 graphs obtained by the construction in Section 3 have cyclic edge connectivity no larger than three; since these are the only graphs that can be obtained by our method, we also include the number of these in Table \ref{tb3}, and use NUS3C to denote such graphs.

\begin{table}[H]
\begin{center}
\small
\caption{Number of cubic graphs of order $N$ which are cospectral with at least one other cubic graph (NUS3), the number of such graphs with cyclic edge connectivity at most 3 (NUS3C), and the number of those which can be seen as the result of our construction. The proportion of NUS3 and NUS3C graphs that we generate is also given.}
\label{tb3}
\begin{tabular}{ | l l l l c c | }
\hline
Order & NUS3 graphs & NUS3C graphs & Number constructed & \% (NUS3) & \% (NUS3C)  \\
\hline
 14    &         6            &     6                &        4             &       66.7     &    66.7        \\
 16    &         83           &     65               &        40            &       48.2     &    61.5        \\
 18    &         956          &     841              &        492           &       51.5     &    58.5        \\
 20    &         9779         &     7604             &        6163          &       63.0     &    81.0        \\
 22    &         114635       &     89858            &        78775         &       68.7     &    87.7        \\
\hline
\end{tabular}
\end{center}
\end{table}

As a final remark, denote the set of all cubic graphs of order $N$ which are NUS3 as $C_N$, and then the subset of $C_N$ which consists of graphs which are produced by one of the constructions, denote as $C^*_N$.  Then Table \ref{tb3} suggests the following conjecture, which we leave untouched for future investigations.

\begin{conj} \label{cj1}
$$\lim_{N \rightarrow \infty} \dfrac{|C^*_N|}{|C_N|}=1.$$\\
\end{conj}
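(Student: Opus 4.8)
The plan is to reduce the conjecture to two asymptotic statements and to attack each separately. First, observe that a cubic graph $G$ lies in $C^*_N$ essentially when it admits a nontrivial $3$-edge-cut whose six endpoints are distinct — equivalently, a decomposition $G \cong A \circ B$ in the sense of Definition \ref{def3} for cubic $A,B$ — such that at least one of the two ``apex'' graphs, say $A$ (a side of the cut together with a new vertex $u$ joined to the three cut-endpoints on that side), possesses a cospectral cubic graph $A'$ in which $u$ and some vertex of $A'$ are replaceable; for then Theorem \ref{thm1} produces a cospectral family containing $G$ and a non-isomorphic mate $A' \circ B$. Write $D_N$ for the set of NUS3 cubic graphs of order $N$ admitting no such decomposition at all (in particular the cyclically $4$-edge-connected ones), and $E_N$ for the remaining NUS3 graphs that decompose but for which no decomposition has a ``spectrally flexible'' half, with $E_N$ enlarged to absorb the degenerate situations where $A'\circ B\cong G$. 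One checks $C_N \setminus C^*_N \subseteq D_N \cup E_N$, so it suffices to prove $|D_N| = o(|C_N|)$ and $|E_N| = o(|C_N|)$.

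For the bound on $|D_N|$, the aim is to show that being cospectral with another cubic graph is, with density one, incompatible with having no small matching cut. Concretely I would try to establish that almost every NUS3 cubic graph arises from one of a short list of \emph{local} cospectrality mechanisms — Godsil--McKay switching on a bounded switching set, the vertex/edge compositions of Section 3, and Schwenk-type coalescences — each of which either creates a nontrivial bounded edge-cut or can be rearranged to exhibit one (and in a cubic graph a bounded edge-cut can be refined to a cyclic cut of size at most three). The technical tool is Godsil's walk-generating-matrix criterion $(\ref{eq1})$, which localises cospectrality to the submatrix indexed by a bounded vertex set; the argument would be that if $G$ is cospectral with $G'$ then, generically, the certifying data is supported on a bounded region, and a bounded region in a cubic graph is separated from the rest by a bounded cut. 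Making ``generically'' precise — i.e., bounding the number of NUS3 cubic graphs whose cospectrality is \emph{not} so localised — is the heart of the difficulty.

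For the bound on $|E_N|$, fix $G$ with cospectral mate $G'$ and a nontrivial matching $3$-edge-cut, giving $G \cong A \circ B$. I would argue that for all but a negligible fraction of such $G$ the mate may be taken to share the ``$B$ side'': $G' \cong A' \circ B$ for a cubic graph $A'$, whereupon the determinant identity in the proof of Theorem \ref{thm1} — which shows $\phi(G,x)$ depends on the $B$-side only through the finitely many polynomials $\phi(B\setminus K,x)$ with $|K|\le 2$ — combined with Lemma \ref{lem1} forces $u$ and the apex of $A'$ to be replaceable, placing $G$ in $C^*_N$. The substance is a counting argument over $A$-sides carrying prescribed walk data: cospectrality of $G$ with $G'$ freezes the $B$-side invariants, so the remaining freedom lives on the $A$-side, and one must show this freedom is almost always realised by a genuine \emph{simple} cubic graph $A'$ with the matching apex neighbourhood. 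This realisability question for walk-generating matrices is the obstacle on this side, and I do not know how to settle it in general.

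The main obstacle, in both halves, is the one that makes the determination-by-spectrum problem hard: there is no classification of \emph{why} two cubic graphs are cospectral, and the conjecture is in effect the assertion that the reasons are, with density one, local and cut-like, while ``exotic'' cospectral pairs are negligible. A proof therefore seems to need either a structural dichotomy for cospectral cubic graphs or a direct counting argument matching a lower bound on $|C^*_N|$ with an upper bound on $|C_N|$ — and at present neither $|C_N|$ nor $|C^*_N|$ is understood asymptotically to the required precision. For this reason I expect the honest first step to be computational: extend Table \ref{tb3} to $N = 24, 26$ and in particular track the ratio of NUS3C to NUS3 graphs, since if that ratio tends to a constant below $1$ the conjecture as stated is false and effort should be redirected toward identifying the true limit.
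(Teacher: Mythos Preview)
The statement you are addressing is Conjecture~\ref{cj1}, which the paper explicitly leaves open: the authors write that they ``leave [it] untouched for future investigations'' and offer no argument beyond the numerical evidence in Table~\ref{tb3}. There is therefore no proof in the paper against which to compare your proposal.

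As for the proposal itself, it is not a proof but a strategy sketch, and you say as much. The decomposition $C_N\setminus C^*_N\subseteq D_N\cup E_N$ is reasonable, but both halves rest on assertions you explicitly cannot establish. For $|D_N|=o(|C_N|)$ you need that cospectrality between cubic graphs is, with density one, certified by bounded local data; this is essentially a classification statement for cospectral cubic pairs, and as you note no such classification exists. For $|E_N|=o(|C_N|)$ you need that a generic cospectral mate of $G=A\circ B$ can be taken of the form $A'\circ B$ with the same $B$-side, which amounts to a realisability result for prescribed walk-generating submatrices that you also concede is open. Your closing paragraph is the honest summary: the conjecture asserts that ``exotic'' cospectral pairs are asymptotically negligible, and neither a structural dichotomy nor sharp enough asymptotics for $|C_N|$ or $|C^*_N|$ is currently available. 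The proposal therefore identifies the right obstacles but does not remove any of them; it should be read as a research programme rather than a proof.
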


\end{document}